\documentclass[a4paper,12pt]{article}
\usepackage{array,amsfonts,amsmath,graphicx,mathrsfs,amssymb,amsthm,latexsym}
\usepackage[latin1]{inputenc}
\usepackage{color}

\usepackage{csquotes}

\textheight 22cm
\textwidth 14cm
 \voffset -1cm

\newtheorem{thm}{Theorem}[section]

\newtheorem{lemma}[thm]{Lemma}

\def \cH {{\cal H}}

\begin{document}
\title{Uniformly resolvable decompositions of $K_v$ into $1$-factors and odd $n$-star factors }

\author
 {Jehyun Lee and  Melissa Keranen\\
\small Department of Mathematical Sciences \\
\small Michigan Technological University\\
}

\maketitle

\vspace{5 mm}

\begin{abstract}
We consider uniformly resolvable decompositions of $K_v$ into subgraphs such that each resolution class contains only blocks isomorphic to the same graph. We give a partial solution for the case in which all resolution classes are either $K_2$ or $K_{1,n}$ where $n$ is odd.

\end{abstract}

\section{Introduction}\label{intro}

Let $G=(V,E)$ be a graph. An $\cH$-$decomposition$ of the graph $G$ is a collection of edge disjoint subgraphs $\cH = \{H_1,H_2, \dots, H_a \}$ such that every edge of $G$ appears in exactly one graph $H_i \in \cH$. 

The subgraphs, $H_i \in \cH$, are called blocks. An $\cH$-decomposition is called resolvable if the blocks in $\cH$ can be partitioned into classes (or factors) $F_i$, such that each $F_i$ is a spanning subgraph of $G$. A resolvable $\cH$-decomposition is also called an $\cH$-factorization of $G$, and its classes are referred to as $\cH$-factors. An $\cH$-decomposition is called $uniformly$ $resolvable$ if each factor $F_i$ consists of blocks that are all isomorphic.

In the case of uniformly resolvable $\mathcal{H}$-decompositions, existence results can be categorized based on $\mathcal{H}$: when $\cH$ is a set of two complete graphs of order at most five in \cite{DLD, R, SG, WG}; when $\cH$ is a set of two or three paths on two, three or four vertices in \cite{GM1,GM2, LMT}; for $\cH =\{P_3, K_3+e\}$ in \cite{GM}; for $\cH =\{K_3, K_{1,3}\}$ in \cite{KMT}; for $\cH =\{C_4, P_{3}\}$ in \cite{M}; and for $\cH =\{K_3, P_{3}\}$ in \cite{MT}.

If $\cH=\{H_1,H_2 \}$, then we may also consider how many factors contain copies of $H_1$ and how many factors contain copies of $H_2$. We let $(H_1,H_2)$-$URD(v;r,s)$ denote a uniformly resolvable decomposition of $K_v$ into $r$ classes containing only copies of $H_1$ and $s$ classes containing only copies of $H_2$. In this paper, we consider the existence problem for $\{H_1,H_2\}=\{K_2,K_{1,n}\}$.

While the general case $(K_2,K_{1,n})$-$URD(v;r,s)$ is still open and in progression, we have observed that the standard methods used for most cases of $(r,s)$ are not applicable to solve the cases when number of $1$-factors is small. Thus, we studied these cases separately. With regards to the extremal cases, we have the following. 

A $K_2$-factorization of $G$ is known as a $1$-{\em factorization} and its factors are called 1-{\em factors}. We let $I$ denote a $1$-factor. It is well known that a 1-factorization of $K_v$  exists if and only if $v$ is even (\cite{Lu}).

If $n=3$, that is, the case of $(K_2,K_{1,3})$-$URD(v;r,s)$, necessary and sufficient conditions for the existence of the decomposition was given in \cite{CC}. If $n=4$, that is, the case of $(K_2,K_{1,4})$-$URD(v;r,s)$, necessary and sufficient conditions for the existence of the decomposition was given in \cite{MD}. If $n=5$, that is, the case of $(K_2,K_{1,5})$-$URD(v;1,s)$, necessary and sufficient conditions for the existence of the decomposition was given in \cite{JAY}. A generalization of this to the case of $(K_2,K_{1,n})$-$URD(v;1,s)$ for odd $n \geq 3$ is also completely solved in \cite{JAY2}.

In this paper, we focus on the $(K_2,K_{1,n})$-$URD(v;r,s)$ for all $(r,s)$ pairs where $r,s \geq 1$, and we give a partial solution to the existence problem of a $(K_2,K_{1,n})$-URD$(v;$ $r,s)$.\\

\section{Necessary Conditions}

\begin{lemma} 
\label{ness}
Let $n \geq 3$ be an odd integer. If a $(K_2,K_{1,n})-URD(v;r,s)$ exists, then there is an integer $x$, $0\leq x \leq \lfloor \frac{v-1}{2n} \rfloor$, such that $s=(n+1)x$ and $r=v-1-2nx$. Further, $v \equiv 0 \pmod{2}$ if $r>0$ and $v\equiv 0 \pmod{(n+1)}$ if $s>0$. 
\end{lemma}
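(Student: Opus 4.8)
The plan is to extract every arithmetic constraint from two elementary counts: a global edge/center count, and, crucially, a degree count carried out at a single but arbitrary vertex.

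First I would record the basic structural facts about the two factor types. A $K_2$-factor is a perfect matching, so it exists only when $v$ is even, it contributes exactly $1$ to the degree of every vertex, and it has $v/2$ edges; this yields the condition $v \equiv 0 \pmod 2$ whenever $r>0$. A $K_{1,n}$-factor partitions the $v$ vertices into stars on $n+1$ vertices each, so it exists only when $(n+1)\mid v$, giving $v\equiv 0 \pmod{n+1}$ whenever $s>0$; moreover each such factor has exactly $v/(n+1)$ star-centers and $vn/(n+1)$ edges.

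The key step is the per-vertex degree count. Fix a vertex $w$. In each of the $r$ matching factors $w$ has degree $1$, while in each of the $s$ star factors $w$ is either a center (degree $n$) or a leaf (degree $1$). Letting $c_w$ be the number of star factors in which $w$ is a center and summing degrees over all $r+s$ spanning factors gives
\[
r + c_w(n-1) + s = v-1 .
\]
Since $n\ge 3$, the coefficient $n-1$ is nonzero, so $c_w=(v-1-r-s)/(n-1)$ is forced to be the \emph{same} value $c$ for every vertex $w$. I would then double-count star-centers: summing $c_w=c$ over all $v$ vertices counts each center once, and each star factor contributes exactly $v/(n+1)$ centers, so $vc = s\,v/(n+1)$, whence $c=s/(n+1)$. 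As $c$ is a nonnegative integer, this forces $(n+1)\mid s$; writing $s=(n+1)x$ with $x\ge 0$ identifies $x=c$, and substituting back into the degree equation gives $r=v-1-2nx$.

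Finally I would pin down the range of $x$: nonnegativity of $r=v-1-2nx$ yields $2nx\le v-1$, hence $x\le \lfloor (v-1)/(2n)\rfloor$, while $x\ge 0$ is immediate from $s\ge 0$. The main (and essentially the only) subtlety is the observation that the single-vertex degree equation forces the center-multiplicity $c_w$ to be vertex-independent; once that is in hand, the divisibility $(n+1)\mid s$ and the formula for $r$ follow from a routine global count of centers, and the two divisibility conditions on $v$ are just the existence requirements for the two factor types. Everything else is elementary arithmetic.
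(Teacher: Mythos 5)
Your proof is correct and takes essentially the same route as the paper: both arguments fix an arbitrary vertex, count its degree over the $r+s$ factors according to whether it is a star-center or a leaf, and combine this local count with a global one to force $s=(n+1)x$ and $r=v-1-2nx$, with the bound on $x$ coming from $r\ge 0$ and the two divisibility conditions on $v$ from the existence of the individual factor types. The only cosmetic difference is that your global count is of star-centers (giving $c=s/(n+1)$ directly from integrality) while the paper's is of edges, from which it solves the resulting linear system to get $y=nx$; these are equivalent computations.
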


\begin{proof}
Assume that there exists a $(K_2,K_{1,n})-URD(v;r,s).$ By counting the number of edges of $K_v$ that appear in the factors it follows that

\begin{align*}
r\frac{v}{2}+s\frac{nv}{n+1}=\frac{v(v-1)}{2},
\end{align*}
and hence
\begin{align}
(n+1)r+2ns=(n+1)(v-1).\label{eq1}
\end{align}
Let $S$ be the set of $s$ $K_{1,n}$-factors, and let $R$ be the set of $r$ 1-factors. Because the factors of $R$ are regular of degree 1, every vertex of $K_v$ is incident to $r$ edges in $R$ and $(v-1)-r$ edges in $S$. Assume that any fixed vertex appears in $x$ factors of $S$ with degree $n$ and in $y$ factors of $S$ with degree 1. Because

\begin{align*}
x+y=s \textrm{ and } nx+y=v-1-r,    
\end{align*}
equation (\ref{eq1}) gives

\begin{align*}
(n+1)(v-1-nx-y)+2n(x+y) = (n+1)(v-1).
\end{align*}

This implies $y=nx$ and $s=(n+1)x$.

Further, replacing $s=(n+1)x$ in Equation (1) provides $r=v-1-2nx$, where $x\leq \frac{v-1}{2n}$ (because $r$ is a non-negative integer).

Finally, if $r > 0$, then $v$ must be even; while if $s>0$, then necessarily $n+1$ divides $v$ (because $K_{1,n}$ is a graph on $n+1$ vertices).

\end{proof}

If there exists a $(K_2,K_{1,n})-URD(v;r,s)$ with $s=0$, the result is a $1$-factorization(see \cite{Lu}). So, we will consider the cases with $s > 0$. Therefore, $v \equiv 0 \pmod{n+1}$, and we will prove the existence of a $(K_2,K_{1,n})$-$URD(v;r,s)$ for all possible $(r,s) \in \mathcal{J}=\{(r,s)|r=v-1-2nx, s=(n+1)x, \textrm{ with } 0\leq x \leq \lfloor \frac{v-1}{2n} \rfloor\}$.

\section{Weighted Graphs and Preliminary Results}

Let $G$ be a graph, and $t$ be a positive integer. A $\textit{weighted graph}$ $G_{(t)}$ is a graph on $V(G) \times \mathbb{Z}_t$ with edge set $\{\{x_i,y_j\}:\{x,y\} \in E(G), i,j \in \mathbb{Z}_t \}$. We refer to the construction of $G_{(t)}$ from $G$ as \enquote{giving weight $t$ to $G$}.

For some positive integer $m$, let $K_m$ be a complete graph. Then, for some positive integer $n$, let $K_{m(n+1)}$ be the graph obtained by giving weight $n+1$ to $K_m$. Then for each $x \in V(K_m)$, let $K_{n+1}^x$ denote a complete graph with vertex set $V(K_{n+1}^x) = \{x_i | x \in K_m, i\in \mathbb{Z}_{n+1} \}$. Note that each $K_{n+1}^x$ are mutually disjoint. Thus, for $v=m(n+1)$, we can view the complete graph $K_v$ as $K_v = K_{m(n+1)} \cup \Big(\bigcup\limits_{x\in V(K_m)}K_{n+1}^x \Big)$.

For our purposes, we will decompose $K_{m(n+1)}$ into weighted cycles $C_{m(n+1)}$. We begin with two well-known results about the decomposition of complete graphs into cycles.

\begin{lemma} 
\label{oddC}
(Alspach, Brian (2001)) 
: For positive odd integers $m$ and $n$ with $3\leq m \leq n$, the graph $K_{n}$ can be decomposed into cycles of length $m$ if and only if the number of edges in $K_{n}$ is a multiple of $m$.
\end{lemma}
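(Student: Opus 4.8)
The plan is to treat the two implications separately. Necessity is immediate: in a decomposition of $K_n$ into $m$-cycles every block uses exactly $m$ edges, so $m$ must divide the total number of edges $\binom{n}{2}=\tfrac{n(n-1)}{2}$. (The standing hypothesis that $n$ is odd guarantees that $K_n$ is regular of even degree $n-1$, which is the other classical prerequisite for any cycle decomposition.) All the work is in sufficiency, and since the statement is exactly the odd-length case of the Alspach--Gavlas theorem, I only sketch the architecture of such an argument rather than carry it out.

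For the construction I would put $V(K_n)=\mathbb{Z}_n$ and work with the difference (or starter) method. Because $n$ is odd, the edge set of $K_n$ partitions into the $\tfrac{n-1}{2}$ length classes $L_d=\{\{i,i+d\}:i\in\mathbb{Z}_n\}$, $1\le d\le\tfrac{n-1}{2}$, each of size exactly $n$. A single $m$-cycle whose $m$ edge-lengths are pairwise distinct, when developed under the $n$ translations $x\mapsto x+1$, produces a full orbit of $n$ cycles that exactly exhausts the $m$ length classes it uses. Hence the geometric problem reduces to the combinatorial one of packaging the lengths $\{1,2,\dots,\tfrac{n-1}{2}\}$ into starter $m$-cycles, i.e.\ of sequencing signed differences $d_1,\dots,d_m$ with distinct partial sums and $\sum d_i\equiv 0\pmod n$.

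The sufficiency proof then proceeds by a bounded base check together with a recursion that raises $n$ by $2m$. First I would settle a finite window of base cases: the case $m=n$ is Walecki's decomposition of $K_n$ ($n$ odd) into $\tfrac{n-1}{2}$ Hamiltonian cycles, and the remaining smallest admissible values of $n$ (one representative for each residue of $n$ modulo $2m$ permitted by the divisibility condition) are built directly by exhibiting starter cycles as above, with short-orbit cycles used to absorb the length classes left over when $2m\nmid (n-1)$. For the inductive step, given an $m$-cycle decomposition of $K_n$ I would adjoin $2m$ new vertices and decompose the graph $H$ consisting of all edges incident to them, namely $H=K_{2m}\cup K_{n,2m}$; since $n$ is odd this $H$ has $m(2m-1+2n)$ edges (a multiple of $m$) and admits its own $m$-cycle decomposition, so appending it to the decomposition of $K_n$ yields $K_{n+2m}$. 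Both the divisibility and the parity of $n$ are preserved under $n\mapsto n+2m$, so the recursion together with the base window covers every admissible pair $(m,n)$.

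The main obstacle is exactly the explicit difference bookkeeping hidden in the previous paragraph: producing starter $m$-cycles that partition the prescribed lengths, and decomposing the mixed graph $H=K_{2m}\cup K_{n,2m}$, both require careful, case-dependent sequencing of differences, and one expects a handful of sporadic small $n$ (where too few length classes are available to form any valid starter) to resist the uniform construction and to need separate, ad hoc treatment. This is why the result, though clean to state, is genuinely deep, and in the present paper it is invoked as a black box.
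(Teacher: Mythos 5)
This lemma is not proved in the paper at all: it is quoted as a known result of Alspach and Gavlas (2001) and used as a black box, so there is no in-paper argument to compare your proposal against. Judged on its own terms, your write-up correctly disposes of necessity and correctly identifies the standard architecture for sufficiency (difference/starter methods on $V(K_n)=\mathbb{Z}_n$, Walecki's Hamiltonian decomposition for $m=n$, and some form of induction), but it is an outline rather than a proof, and the gaps it leaves are exactly where the theorem's entire difficulty lives. Concretely: (a) you never construct the starter $m$-cycles, i.e.\ the sequencings of differences with distinct partial sums that partition the length classes $\{1,\dots,\tfrac{n-1}{2}\}$ --- this is the combinatorial core of the result and is highly case-dependent; (b) the inductive step asserts that $H=K_{2m}\cup K_{n,2m}$ admits an $m$-cycle decomposition merely because its edge count $m(2m-1+2n)$ is a multiple of $m$ and its degrees are even, but decomposing such a join graph into odd cycles is itself a nontrivial theorem of roughly the same depth as the one being proved, and no argument is offered; (c) the ``bounded base window'' consists of one case per admissible residue of $n$ modulo $2m$, which is finitely many for each $m$ but unbounded as $m$ grows, and none of these cases is exhibited. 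Since you acknowledge these omissions yourself, the honest conclusion is that the proposal is a plausible roadmap to the Alspach--Gavlas theorem, not a proof of it; for the purposes of this paper the correct move is simply to cite the result, which is what the authors do.
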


\begin{lemma} 
\label{evenC}
(Alspach, Brian (2001)) 
: For positive even integers $m$ and $n$ with $4\leq m \leq n$, the graph $K_{n}-I$ can be decomposed into cycles of length $m$ if and only if the number of edges in $K_{n}-I$ is a multiple of $m$.
\end{lemma}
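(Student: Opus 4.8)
The necessity is immediate: each $m$-cycle uses exactly $m$ edges, so a partition of $K_n-I$ into $m$-cycles forces $m\mid |E(K_n-I)|=\frac{n(n-2)}{2}$. All the content is in the sufficiency direction, and the plan is to construct the decomposition recursively, enlarging the vertex set $m$ vertices at a time.

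The recursive step I would use is as follows. Suppose $K_n-I$ has already been decomposed into $m$-cycles, and adjoin $m$ new vertices carrying a fixed perfect matching $I''$. The edges not yet accounted for split as
\[
K_{n+m}-(I\cup I'')=(K_n-I)\cup(K_m-I'')\cup K_{m,n},
\]
where $K_{m,n}$ joins the old and new vertices. The first piece is handled by the inductive hypothesis; the second piece $K_m-I''$, on $m$ (even) vertices, splits into $(m-2)/2$ Hamilton cycles of length $m$ by Walecki's classical construction; and the third piece $K_{m,n}$ splits into $m$-cycles by Sotteau's theorem, whose hypotheses — both parts even, each part at least $m/2$, and $m\mid mn$ — hold because $m$ and $n$ are even and $n\ge m$. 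Gluing the three partial decompositions gives an $m$-cycle decomposition of $K_{n+m}-(I\cup I'')$. Since $\frac{(n+m)(n+m-2)}{2}-\frac{n(n-2)}{2}=m\cdot\frac{2n+m-2}{2}$ is a multiple of $m$, admissibility of $n$ is equivalent to admissibility of $n+m$ whenever both are at least $m$; hence the recursion never leaves the feasible region, and the same identity shows that the smallest admissible $n$ in each residue class modulo $m$ lies in the range $m\le n<2m$.

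It therefore remains to supply the base cases: one $m$-cycle decomposition of $K_n-I$ for each admissible $n$ with $m\le n<2m$. The value $n=m$ is always admissible and is again covered by Walecki. For the other base values no reduction to a smaller complete graph is available, and here I would use explicit rotational (difference-method) constructions: place the vertices on $\mathbb{Z}_n$, remove the antipodal matching $\{\{i,i+\tfrac{n}{2}\}\}$ as $I$, and exhibit a small set of starter cycles whose orbits under the translation action of $\mathbb{Z}_n$ partition the remaining edges, verifying that every edge difference is covered exactly once.

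I expect these base cases to be the main obstacle. The recursion itself is mechanical, but pinning down exactly which even $n$ in $[m,2m)$ are admissible, and then constructing starter cycles that tile the differences while respecting the removed $1$-factor $I$, is delicate and forces a case analysis that grows with $m$; this is precisely the part of the argument that resists a uniform treatment.
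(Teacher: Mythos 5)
The paper does not prove this statement at all: it is quoted as a known external result (the even-cycle-length case of the Alspach--Gavlas theorem, \emph{Cycle decompositions of $K_n$ and $K_n-I$}, J.~Combin.\ Theory Ser.~B, 2001), so there is no internal proof to compare yours against. Judged on its own terms, your argument has a genuine gap. The recursive step is sound: the splitting $K_{n+m}-(I\cup I'')=(K_n-I)\cup(K_m-I'')\cup K_{m,n}$ is correct, Walecki does give $(m-2)/2$ Hamilton cycles of $K_m-I''$, Sotteau's theorem does apply to $K_{m,n}$ with both parts even and $n\geq m\geq m/2$, and your identity showing that admissibility of $n$ depends only on $n\bmod m$ is right, so the reduction to base cases $m\leq n<2m$ is legitimate. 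But you then discharge every base case other than $n=m$ with the sentence that you ``would use explicit rotational (difference-method) constructions'' and immediately concede that producing them is delicate and resists uniform treatment. Those base cases are not a routine verification to be deferred; they are the entire substance of the theorem. Exhibiting, for every even $m\geq 4$ and every admissible even $n$ with $m<n<2m$, starter cycles whose translates under $\mathbb{Z}_n$ tile the differences of $K_n-I$ exactly once is precisely what occupied Alspach and Gavlas, and no uniform family of starters is written down in your proposal. As it stands you have reproduced the standard (and correct) reduction scheme but proved only the necessity direction and the single base case $n=m$; the sufficiency claim for general admissible $n$ remains unestablished.

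For the purposes of this paper the right move is the one the authors make: cite the result rather than reprove it. If you do want a self-contained argument, you must either supply the starter-cycle constructions for all base cases (a substantial case analysis parametrized by $n-m$) or invoke the published theorem directly.
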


It is obvious that $m$ divides $|E(K_m)|$, thus if $m$ is odd, by Lemma~\ref{oddC} we can decompose $K_m$ into $m$-cycles. The number of $m$-cycles in the decomposition is:
\begin{align*}
|E(K_m)|=\frac{m(m-1)}{2}\cdot \frac{1}{m}=\frac{m-1}{2}.
\end{align*}

Similarly if $m$ is even, by Lemma~\ref{evenC}, we can decompose $K_m$ into $I$ and $\frac{m-2}{2}$ $m$-cycles.

Now give weight $n+1$ to $K_m$ and to each $m$-cycle, $C_m$ to obtain $K_{m(n+1)}$ and copies of weighted $m$-cycle, $C_{m(n+1)}$. An $m$-cycle and a weighted $m$-cycle is illustrated in Figure~\ref{v27}. Hence, we have a decomposition of $K_{m(n+1)}$ into $\frac{m-1}{2}$ weighted $m$-cycles, $C_{m(n+1)}$ for odd $m \geq 3$. We also have a decomposition of $K_{m(n+1)}$ into $I_{(n+1)}$ and $\frac{m-2}{2}$ weighted $m$-cycles, $C_{m(n+1)}$, for even $m \geq 4$. Let $C_{m(n+1)}^k$ denote the $k^{th}$ weighted $m$-cycle. If $m\geq3$ is odd, then $i \in \{1,2,\dots,\frac{m-1}{2} \}$ and if $m\geq4$ is even, then $k \in \{1,2,\dots,\frac{m-2}{2} \}$. Note that all $C_{m(n+1)}^i$ have the same vertex set, but they are all mutually edge disjoint subgraphs of $K_{m(n+1)}$.

By this decomposition, we now view $K_v = (K_{m(n+1)}) \cup \Big(\bigcup\limits_{x\in V(K_m)}K_{n+1}^x\Big)$ as follows. 
\begin{align*}
    &K_v = (K_{m(n+1)}) \cup \Big(\bigcup\limits_{x\in V(K_m)}K_{n+1}^x\Big) \\
        &=
        \begin{cases}
           \Big( \cup_{k=1}^{\frac{m-1}{2}} C_{m(n+1)}^k \Big) \cup \Big(\bigcup\limits_{x\in V(K_m)}K_{n+1}^x\Big),  &\textrm{ if $m$ is odd }\\
           \Big( \cup_{k=1}^{\frac{m-2}{2}} C_{m(n+1)}^k \Big) \cup \Big(\bigcup\limits_{x\in V(K_m)}K_{n+1}^x\Big) \cup (I_{(n+1)}),  &\textrm{ if $m$ is even }
    \end{cases}
\tag{2}
\end{align*}

\begin{figure}[!htb]
    \centering
    \includegraphics[width=\textwidth]{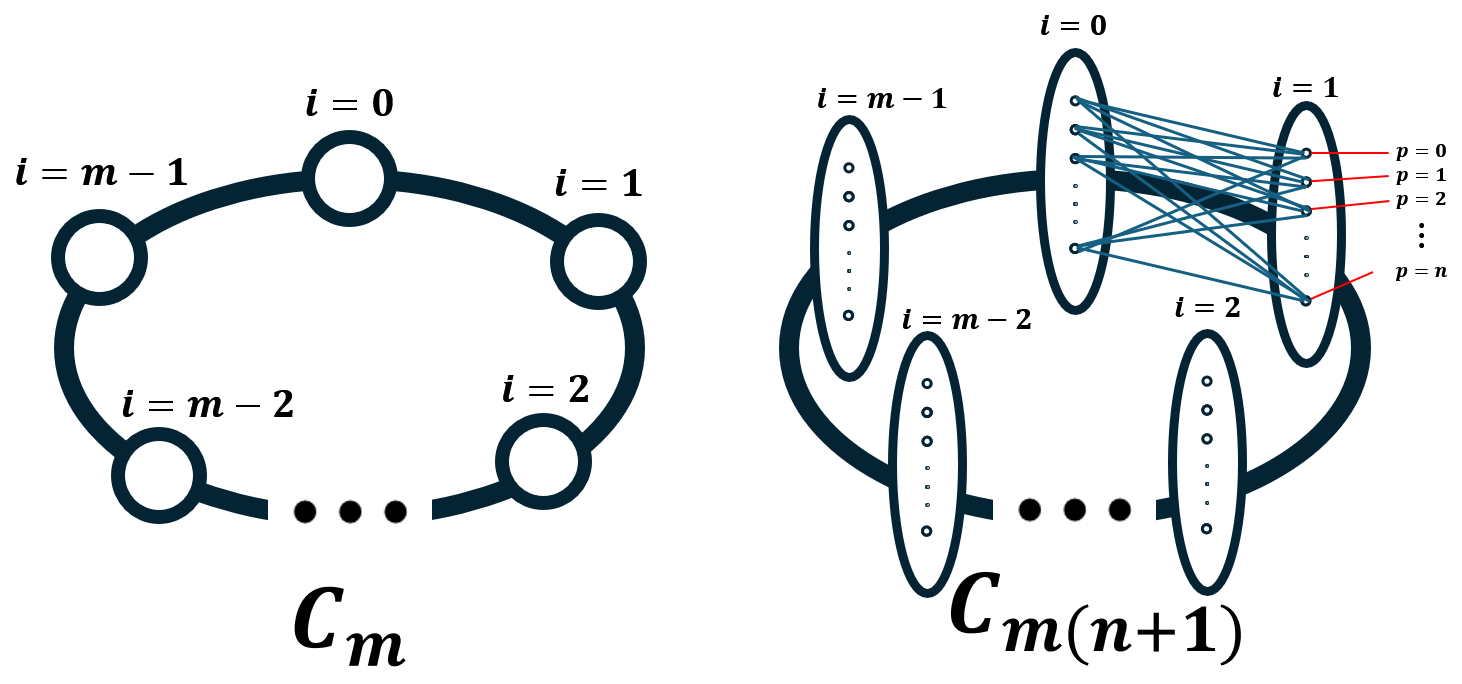}
    \caption{A cycle $C_m$ and a weighted cycle $C_{m(n+1)}$}
    \label{v27}
\end{figure}

\subsection{Almost Uniformly Resolvable Decompositions}
For a weighted graph $G_{(t)}$ on the vertex set $V(G) \times \mathbb{Z}_t$, let $J(G_{(t)})$ be a subgraph graph of $G_{(t)}$ with $V(J(G_{(t)}))=V(G_{(t)})$ and edge set $\{\{x_i,y_i\}:\{x,y\} \in E(G), i \in \mathbb{Z}_{t} \}$. Let $H = G_{(t)}-J(G_{(t)})$ be the graph on vertex set $V(G) \times \mathbb{Z}_t$ with edge set $\{\{x_i,y_j\}:\{x,y\} \in E(G), i,j \in \mathbb{Z}_{t}, i \not = j \}$. If an $(X,Y)-URD(H;r,s)$ exists, then we say that $G_{(t)}$ has an \textit{almost uniformly resolvable decomposition}, denoted by $(X,Y)$-$AURD(G_{(t)};r,s)$.

In this section, we construct {\em AURD} of given weighted cycles and of a weighted edge.

Suppose $C_m=(0,1,\dots,m-1)$ is an $m$-cycle and $C_{m(n+1)}$ is the corresponding weighted $m$-cycle(with weight $n+1$). Then edges in $C_m$ are directed edges, for example, $(x,x+1)$ with $x\in \{0,1,2,\dots,m-1\}$. The vertex set of $C_{m(n+1)}$ is
\[ V(C_{m(n+1)}) = V(C_m) \times \mathbb{Z}_{n+1} \]
and the edge set of $C_{m(n+1)}$ is
\[ E(C_{m(n+1)} = \{ (x_i,(x+1)_j) | (x,x+1)\in E(C_m); i,j \in \mathbb{Z}_{n+1} \}. \]
We define the difference of an edge $(x_i,(x+1)_j)$ to be $d = j-i \pmod{n+1}$. 

Our first result gives the equivalent of the decomposition of $C_{m(n+1)}$ into $1$-factors. However, for our purposes, it is vital that we view it as an almost uniformly resolvable decomposition.

\begin{lemma}
\label{oneF}
A $(K_2,K_{1,n})-AURD(C_{m(n+1)};2n,0)$ exists for any odd integer $n$ and integer $m\geq3$. 
\end{lemma}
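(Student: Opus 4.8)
The plan is to unwind the definitions and reduce to a clean factorization statement. By definition a $(K_2,K_{1,n})$-$AURD(C_{m(n+1)};2n,0)$ is a $(K_2,K_{1,n})$-$URD(H;2n,0)$ for $H=C_{m(n+1)}-J(C_{m(n+1)})$, the graph whose edges are exactly the $\{x_i,(x+1)_j\}$ with $i\neq j$. Since $s=0$, such a decomposition is nothing but a $1$-factorization of $H$ into $2n$ perfect matchings, so the lemma reduces to: \emph{$H$ admits a $1$-factorization into $2n$ $1$-factors}. First I would record that $H$ is $2n$-regular (each $x_i$ has $n$ forward neighbours $(x+1)_j$, $j\neq i$, and $n$ backward neighbours $(x-1)_k$, $k\neq i$) on $m(n+1)$ vertices, and that $m(n+1)$ is even because $n$ is odd, so no parity obstruction rules out a $1$-factorization.

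The engine is the difference partition. For each $d\in\{1,\dots,n\}$ let $D_d$ be the set of edges of difference $d$. Every vertex $x_i$ lies on exactly one forward edge $(x_i,(x+1)_{i+d})$ and one backward edge $((x-1)_{i-d},x_i)$ of difference $d$, so each $D_d$ is a $2$-factor and $H=D_1\cup\dots\cup D_n$. Following forward edges in $D_d$ advances the position by $1$ and the index by $d$ at each step, so each component of $D_d$ is a cycle of length
\begin{align*}
L_d=\operatorname{lcm}\!\Big(m,\tfrac{n+1}{\gcd(d,n+1)}\Big).
\end{align*}
A $2$-regular graph all of whose cycles are even splits into two perfect matchings by the alternating $2$-colouring; hence if every $L_d$ is even we simply output those $2n$ matchings and are done.

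Next I would isolate when this already suffices. Writing $n+1=2^ab$ with $b$ odd and $a\geq1$, a short $2$-adic computation gives that $L_d$ is odd \emph{exactly} when $m$ is odd and $2^a\mid d$. In particular, when $m$ is even, or when $b=1$ (so $n+1$ is a power of two, forcing $2^a\mid d$ impossible for $1\le d\le n$), every $L_d$ is even and the $2$-factor colouring finishes the proof at once; for even $m$ this is the familiar construction of pairing the positions of $C_m$ and reading off the matchings blockwise. The substantive case is therefore $m$ odd with $b>1$, where some $D_d$ consist of odd cycles and cannot be two-coloured individually.

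The main obstacle is precisely this odd case, and I expect it to force combining difference classes rather than treating them singly. The route I would pursue is to pair each difference $d$ with its antipode $d'=d+\tfrac{n+1}{2}$. Since $\tfrac{n+1}{2}=2^{a-1}b$ is never divisible by $2^a$, the middle value $d=\tfrac{n+1}{2}$ is self-paired and always gives even cycles (handled by ordinary two-colouring), while the remaining differences split into $\tfrac{n-1}{2}$ antipodal pairs; the factor count $4\cdot\tfrac{n-1}{2}+2=2n$ works out. For each pair, the $4$-regular subgraph $D_d\cup D_{d'}$ admits the translation $x_i\mapsto x_{i+(n+1)/2}$ as a fixed-point-free involution (its shift $(0,\tfrac{n+1}{2})=(1,d')-(1,d)$ lies in the subgroup generated by the connection set, so it preserves every component), which forces each component to have even order; thus at least the parity obstruction to a perfect matching disappears. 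One must then exhibit a $1$-factorization of each such $4$-regular piece into four perfect matchings. Producing this last factorization uniformly -- showing the merged pieces are Class $1$ and splitting them explicitly -- is the delicate point, since naive regroupings (for example merging only the "bad" classes) can create components of odd order with no perfect matching at all. Verifying that the antipodal merge avoids this and always decomposes into four matchings, for every odd $m$ and every $d$, is the crux of the lemma.
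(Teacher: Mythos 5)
Your reduction to a $1$-factorization of the $2n$-regular graph $H$, the partition into the difference classes $D_d$, the cycle-length formula $L_d=\operatorname{lcm}\big(m,\tfrac{n+1}{\gcd(d,n+1)}\big)$, and the determination of exactly when odd cycles occur are all correct, and they dispose of the case $m$ even (and of $m$ odd with $n+1$ a power of two) in the same spirit as the paper's index-parity splits. But the proposal does not prove the lemma in the remaining case. For $m$ odd with $n+1$ not a power of two you merge each difference $d$ with $d'=d+\tfrac{n+1}{2}$, verify that translation by $(0,\tfrac{n+1}{2})$ preserves each component of $D_d\cup D_{d'}$ and forces it to have even order, and then stop, explicitly labelling the production of four perfect matchings from each such $4$-regular piece as \enquote{the crux}. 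That crux is the entire content of the lemma in the hard case and is not supplied: even order of all components is far from sufficient, since a $4$-regular graph whose components all have even order need not be $1$-factorable (for regular graphs this is exactly being Class $1$, and $4$-regular Class $2$ graphs of even order exist). What would rescue your route is that each component is a connected $4$-valent Cayley graph on an abelian group of even order, hence by the Bermond--Favaron--Mah\'eo theorem a union of two Hamiltonian cycles, each of even length and therefore two perfect matchings; but you neither invoke such a result nor construct the matchings, so as written the argument is incomplete. (A minor slip: the antipode of $d=\tfrac{n+1}{2}$ is $0$, so that difference is unpaired rather than self-paired; it is still a \enquote{good} class, so the count $4\cdot\tfrac{n-1}{2}+2=2n$ survives.)

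For comparison, the paper closes this gap with a different and fully explicit grouping: odd differences are handled singly (your index-parity split works for them for every $m$), and for $m$ odd each odd difference $d$ in a designated set is absorbed together with the two even differences $d-1$ and $d+1$ into a triple, from which six explicit $1$-factors are built by using the odd difference on one or two exceptional positions of $C_m$ to repair the wrap-around parity failure of the staggered construction. To complete your version you must either cite a $1$-factorization theorem applicable to these $4$-regular abelian Cayley graph components or exhibit the four matchings explicitly for every odd $m$ and every antipodal pair.
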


\begin{proof}

If $m \geq 3$ is even, we will construct a $(K_2,K_{1,n})-AURD(C_{m(n+1)};2n,0)$ by the following method. Without loss of generality, assume $C_m=(0,1,\dots,m-1)$ is the $m$-cycle. Let $C_{m(n+1)}$ be the weighted $m$-cycle(with weight $n+1$).\\

If $d \in \{1,2,\dots, n\}$ is odd, we will construct a pair of $1$-factors $B_{1_{a},d}$ and $B_{1_{b},d}$ of $C_{m(n+1)}$ as follows. For any $d$, let
\begin{align*}
B_{1_a,d} =& \{ ((x)_i,(x+1)_{i+d}) | x \in \mathbb{Z}_m \textrm{ and even } i \in \mathbb{Z}_{n+1} \} \\
B_{1_b,d} =& \{ ((x)_i,(x+1)_{i+d}) | x \in \mathbb{Z}_m \textrm{ and odd }  i \in \mathbb{Z}_{n+1} \} .
\end{align*}

If $d \in \{1,2,\dots,n\}$ is even, we construct a pair of $1$-factors $B_{2_{a},d}$ and $B_{2_{b},d}$ of $C_{m(n+1)}$ as follows. For any $d$, let
\begin{align*}
B_{2_a,d} =& \{ ((x)_i,(x+1)_{i+d}), ((x+1)_{i+1},(x+2)_{i+d+1}) | \textrm{ even } x \in \mathbb{Z}_m \textrm{ and even } i \in \mathbb{Z}_{n+1} \} \\
B_{2_b,d} =& \{ ((x)_i,(x+1)_{i+d}), ((x+1)_{i+1},(x+2)_{i+d+1}) | \textrm{ even } x \in \mathbb{Z}_m  \textrm{ and odd } i \in \mathbb{Z}_{n+1}  \} \\
\end{align*}

By this construction, for a given $d \in \{1,2,\dots,n\}$, we obtain two $1$-factors of $C_{m(n+1)}$ which contains all the edges with difference $d$. Because we can construct two $1$-factors for each $d$, we have successfully constructed a $(K_2,K_{1,n})-AURD(C_{m(n+1)};2n,0)$. \\

If $m \geq 3$ is odd, we will construct a $(K_2,K_{1,n})-AURD(C_{m(n+1)};2n,0)$ by the following method. \\

If $n+1 \equiv 2 \pmod{4}$, let $D = \{1,2,\dots,n\}$, and let $D'=\{d|d \equiv 3 \pmod{4} \textrm{ and } d\in D\}$. For any $d \in D'$, we will match $d$ with two even differences $d-1$ and $d+1$ to construct three pairs of $1$-factors $B_{3_a,d}$ and $B_{3_b,d}$, $B_{4_a,d}$ and $B_{4_b,d}$, and $B_{5_a,d}$ and $B_{5_b,d}$ of $C_{m(n+1)}$ as follows. For each $d \in D'$, let
\begin{align*}
B_{3_a,d} = \{ ((0)_i,(1)_{i+d}), ((x)_i,(x+1)_{i+(d-1)}), ((x+1)_{i+1},(x+2)_{i+1+(d-1)}) |&\\
 \textrm{  odd } x \in \mathbb{Z}_m, \textrm{ even } i \in &\mathbb{Z}_{n+1}  \} \\
B_{3_b,d} = \{ ((0)_i,(1)_{i+d}), ((x)_i,(x+1)_{i+(d-1)}), ((x+1)_{i+1},(x+2)_{i+1+(d-1)}) | &\\
 \textrm{ odd } x \in \mathbb{Z}_m,  \textrm{ and odd } i  \in &\mathbb{Z}_{n+1} \},
\end{align*}
\begin{align*}
B_{4_a,d} = \{ ((1)_i,(2)_{i+d}), ((x)_i,(x+1)_{i+(d+1)}), ((x+1)_{i+1},(x+2)_{i+1+(d+1)}) |&\\
  x \not=0, \textrm{ even } x \in \mathbb{Z}_m , \textrm{ and even } i \in &\mathbb{Z}_{n+1} \}\\
B_{4_b,d} = \{ ((1)_i,(2)_{i+d}), ((x)_i,(x+1)_{i+(d+1)}), ((x+1)_{i+1},(x+2)_{i+1+(d+1)}) | &\\
  x \not=0, \textrm{ even } x \in \mathbb{Z}_m,  \textrm{ and odd } i \in &\mathbb{Z}_{n+1} \},
\end{align*}
\begin{align*}
B_{5_a,d} = \{ ((0)_i,(1)_{i+(d-1)}),((1)_{i+1},(2)_{(i+1)+(d+1)}), ((x)_i,(x+1)_{i+d}) |& \\
  x \not= 0,1, x \in \mathbb{Z}_m, \textrm{ and even } i \in &\mathbb{Z}_{n+1} \}\\
B_{5_b,d} = \{ ((0)_i,(1)_{i+(d-1)}),((1)_{i+1},(2)_{(i+1)+(d+1)}), ((x)_i,(x+1)_{i+d}) | &\\
x \not=0,1, x \in \mathbb{Z}_m,  \textrm{ and odd } i \in &\mathbb{Z}_{n+1} \}.
\end{align*}

Then, for any given odd $d \in D \backslash D'$, that is $d \equiv 1 \pmod{4}$, we construct a pair of $1$-factors $B_{6_{a},d}$ and $B_{6_{b},d}$ of $C_{m(n+1)}$ as follows. For each $d\in D \backslash D'$, let
\begin{align*}
B_{6_a,d} =& \{ ((x)_i,(x+1)_{i+d}) | x \in \mathbb{Z}_m  \textrm{ and odd } i \in \mathbb{Z}_{n+1} \} \\
B_{6_b,d} =& \{ ((x)_i,(x+1)_{i+d}) | x \in \mathbb{Z}_m  \textrm{ and even }  i \in \mathbb{Z}_{n+1} \}. 
\end{align*}

By this construction, for a given $d \in \{1,2,3,\dots,n\}$, we obtain two $1$-factors of $C_{m(n+1)}$ that contain all edges with difference $d$. Thus, we have a $(K_2,K_{1,n})-AURD(C_{m(n+1)};2n,0)$ for any $n+1 \equiv 2\pmod{4}$. \\

If $n+1\equiv 0 \pmod{4}$, let $D= \{1,2,\dots,n\}$. In this case, $D$ contains an odd number of even differences. So, to pair two even differences with one odd difference as the previous construction, we will construct two $1$-factors $B_{7_{a},2}$ and $B_{7_{b},2}$ of $C_{m(n+1)}$ which only contain the edges with the difference $d=2$ only.
\begin{align*}
B_{7_a,2} = \{ ((x)_i,(x+1)_{i+2}), ((x)_{(i+1)},(x+1)_{(i+1)+2})  |&\\
x \in \mathbb{Z}_m, i  \in &\mathbb{Z}_{n+1}, \textrm{ and }  i \equiv 0 \pmod{4} \} \\
B_{7_b,2} = \{ ((x)_i,(x+1)_{i+2}), ((x)_{(i+1)},(x+1)_{(i+1)+2}) | &\\
x \in \mathbb{Z}_m, i  \in &\mathbb{Z}_{n+1}, \textrm{ and }  i \equiv 2 \pmod{4} \}.
\end{align*}

 Then, $D \backslash \{2\}$ contains an even number of even differences. Now, let $D' = \{d|d\equiv 1 \pmod{4}, d\in D, \textrm{ and } d \not= 1 \}$. For any $d \in D'$, we match up with two even differences $d-1$ and $d+1$ to construct three pairs of $1$-factors $B_{8_a,d}$ and $B_{8_b,d}$, $B_{9_a,d}$ and $B_{9_b,d}$, and $B_{10_a,d}$ and $B_{10_b,d}$ of $C_{m(n+1)}$ as follows:
\begin{align*}
B_{8_a,d} = \{ ((0)_i,(1)_{i+d}), ((x)_i,(x+1)_{i+(d-1)}), ((x+1)_{i+1},(x+2)_{i+1+(d-1)}) |&\\
  \textrm{ odd } x \in \mathbb{Z}_m   \textrm{ and even } i \in& \mathbb{Z}_{n+1} \} \\
B_{8_b,d} = \{ ((0)_i,(1)_{i+d}), ((x)_i,(x+1)_{i+(d-1)}), ((x+1)_{i+1},(x+2)_{i+1+(d-1)}) |&\\
  \textrm{ odd } x \in \mathbb{Z}_m   \textrm{ and odd } i \in& \mathbb{Z}_{n+1} \},
\end{align*}
\begin{align*}
B_{9_a,d} = \{ ((1)_i,(2)_{i+d}), ((x)_i,(x+1)_{i+(d+1)}), ((x+1)_{i+1},(x+2)_{i+1+(d+1)}) |&\\
  x\not=0, \textrm{ even } x \in \mathbb{Z}_m,    \textrm{ and even } i \in& \mathbb{Z}_{n+1} \} \\
B_{9_b,d} = \{ ((1)_i,(2)_{i+d}), ((x)_i,(x+1)_{i+(d+1)}), ((x+1)_{i+1},(x+2)_{i+1+(d+1)}) |&\\
  x\not=0, \textrm{ even } x \in \mathbb{Z}_m,   \textrm{ and odd } i \in& \mathbb{Z}_{n+1} \},
\end{align*}
\begin{align*}
B_{10_a,d} = \{ ((0)_i,(1)_{i+(d-1)}),((1)_{i+1},(2)_{(p+1)+(d+1)}), ((x)_i,(x+1)_{i+d}) |&\\
  x\not=0,1, x \in \mathbb{Z}_m,    \textrm{ and even } i \in& \mathbb{Z}_{n+1} \}\\
B_{10_b,d} = \{ ((0)_i,(1)_{i+(d-1)}),((1)_{i+1},(2)_{(p+1)+(d+1)}), ((x)_i,(x+1)_{i+d}) |&\\
  x \not= 0,1, x \in \mathbb{Z}_m,    \textrm{ and odd } i \in& \mathbb{Z}_{n+1} \} .
\end{align*}

Then, for any given odd $d \in D \backslash D'$, we construct a pair of $1$-factors $B_{11_{a},d}$ and $B_{11_{b},d}$ of $C_{m(n+1)}$ as follows:
\begin{align*}
B_{11_a,d} =& \{ ((x)_i,(x+1)_{i+d}) |   x \in \mathbb{Z}_m   \textrm{ and odd } i \in \mathbb{Z}_{n+1}\} \\
B_{11_b,d} =& \{ ((x)_i,(x+1)_{i+d}) |   x \in \mathbb{Z}_m    \textrm{ and even }  i \in \mathbb{Z}_{n+1}\} 
\end{align*}

By this construction, for a given $d \in \{1,2,3,\dots,n\}$, we obtain two $1$-factors of $C_{m(n+1)}$ that contain all edges with difference $d$. Thus, we have a $(K_2,K_{1,n})-AURD(C_{m(n+1)};2n,0)$ for any $n+1\equiv 0\pmod{4}$. \\

Hence, a $(K_2,K_{1,n})-AURD(C_{m(n+1)};2n,0)$ exists for any odd $n>1$.

\end{proof}

\begin{lemma}
\label{starF}
 : A $(K_2,K_{1,n})-AURD(C_{m(n+1)};0,n+1)$ exists for any odd integer $n>1$ and integer $m\geq3$.
\end{lemma}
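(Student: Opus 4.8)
The plan is to exhibit the $n+1$ required $K_{1,n}$-factors explicitly as families of \emph{forward stars} in the weighted cycle, using the difference structure on the edges of $H = C_{m(n+1)} - J(C_{m(n+1)})$. Recall that $H$ consists precisely of the edges $((x)_i,(x+1)_j)$ with nonzero difference $d = j-i$, and that for each fixed $x$ the edges between column $x$ and column $x+1$ form the bipartite complement of a perfect matching, i.e.\ an $n$-regular bipartite graph. First I would record the degree bookkeeping that any such decomposition must satisfy: since $H$ is $2n$-regular and each of the $n+1$ spanning factors meets every vertex, a vertex lying in $a$ stars as a centre and in $b$ stars as a leaf satisfies $a+b=n+1$ and $an+b=2n$, forcing $a=1$. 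Thus in the sought decomposition every vertex must be a centre exactly once and a leaf exactly $n$ times; this both motivates and constrains the construction.

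For each level $c \in \mathbb{Z}_{n+1}$ I would define the factor
\[
F_c = \{\, ((x)_c,(x+1)_j) : x \in \mathbb{Z}_m,\ j \in \mathbb{Z}_{n+1},\ j \neq c \,\},
\]
that is, for every column $x$ take the star centred at the level-$c$ vertex $(x)_c$ whose $n$ leaves are the vertices $(x+1)_j$ with $j \neq c$ in the next column. Each such star is a $K_{1,n}$ all of whose edges have nonzero difference $j-c$, and so lies in $H$. The key verification is that $F_c$ is a genuine spanning $K_{1,n}$-factor: the $m$ stars have distinct centres $(x)_c$, their leaf sets $\{(x+1)_j : j \neq c\}$ lie in distinct columns and so are pairwise disjoint, and no centre is a leaf of another star because every leaf has level $\neq c$. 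Reading this around the cycle (indices mod $m$), column $x$ contributes its centre $(x)_c$ and receives, as the leaves of the star based at $(x-1)_c$, exactly the vertices $(x)_j$ with $j \neq c$; hence each of the $m(n+1)$ vertices is covered exactly once.

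Finally I would check that $\{F_0,F_1,\dots,F_n\}$ partitions $E(H)$: an arbitrary edge $((x)_i,(x+1)_j)$ of $H$ (so $i \neq j$) lies in $F_c$ precisely when its tail is the centre, i.e.\ $c=i$, so it belongs to exactly one factor, and letting $c$ run over $\mathbb{Z}_{n+1}$ exhausts all tail-levels and hence all of $H$. A count confirms this: each $F_c$ has $mn$ edges, so the $n+1$ factors carry $mn(n+1)=|E(H)|$ edges, matching the bookkeeping above. This yields $s=n+1$ factors of type $K_{1,n}$ and $r=0$ factors of type $K_2$, i.e.\ a $(K_2,K_{1,n})$-$AURD(C_{m(n+1)};0,n+1)$. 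The only point requiring care is the cyclic wraparound: one must ensure that the single leaf missing from each column coincides with that column's centre, which is exactly why all centres are placed at the common level $c$. This is what makes the construction uniform in $m$ and free of the parity case analysis needed for the $1$-factorization in Lemma~\ref{oneF}; note in particular that oddness of $n$ is never used, so the argument in fact holds for every $n \geq 1$.
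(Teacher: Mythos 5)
Your construction is exactly the paper's: the paper's factor $S_j$ consists of the stars $(x_j;(x+1)_{j+1},\dots,(x+1)_{j+n})$, which are precisely your stars centred at level $j$ with leaves at all levels $\neq j$ in the next column, and the verification via differences is the same. Your write-up is correct and simply supplies more of the routine checking (degree bookkeeping, disjointness of centres and leaves, edge count) than the paper records, along with the accurate side remark that oddness of $n$ is not actually needed here.
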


\begin{proof}
Assume $C_m=(0,1,\dots,m-1)$ is the $m$-cycle and $C_{m(n+1)}$ is the weighted $m$-cycle. We give $n+1$ $n$-star factors as follows. For each $j \in \mathbb{Z}_{n+1}$, let
\begin{align*}
S_{j} =& \{ (x_j;(x+1)_{j+1}, (x+1)_{j+2},\dots,(x+1)_{j+n}) |   x \in \mathbb{Z}_m \}.
\end{align*}

Then, all edges of $E(C_{m(n+1)})$ with difference $d \in \{1,2,3,\dots,n\}$ appear exactly once in some $S_j$. Hence, a $(K_2,K_{1,n})-AURD(C_{m(n+1)};0,n+1)$ exists for any odd $n>1$.

\end{proof}

Let $m\geq4$ be an even integer, and consider the weighted graph $K_{m(n+1)}$ on the vertex set $V(K_m) \times \mathbb{Z}_{n+1}$. Let $I$ be a $1$-factor of $K_m$ with $\frac{m}{2}$ edges $\{x,y\} \in E(I)$ and $I_{(n+1)}$ be the corresponding weighted $1$-factor of $K_{m(n+1)}$. Then, for each $d\in \{1,2,\dots,n\}$, we can take the following $1$-factor of $I_{(n+1)}$:
\begin{align*}
B_d =& \{ \{(x)_i,(y)_{i+d}\}| i \in \mathbb{Z}_{n+1} \}.
\end{align*}

The only edges from $I_{(n+1)}$ that do not appear are edges from $J(I_{(n+1)})=\{ \{x_i,y_i\} | \{x,y\} \in E(I), i\in \mathbb{Z}_{(n+1)} \}$. Therefore, we have the following result.

\begin{lemma}
\label{edgeF}
 A $(K_2,K_{1,n})-AURD(I_{(n+1)};n,0)$ exists for any odd integer $n>1$.
\end{lemma}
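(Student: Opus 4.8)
The plan is to produce the decomposition of $H=I_{(n+1)}-J(I_{(n+1)})$ directly, exactly as suggested by the sets $B_d$ introduced just before the statement, and then to verify the three requirements of a $URD$: that each $B_d$ is a spanning $1$-factor, that the $B_d$ are pairwise edge-disjoint, and that together they use every edge of $H$. Since $I$ is a $1$-factor of $K_m$, its $\frac{m}{2}$ edges partition $V(K_m)$, so $I_{(n+1)}$ is the vertex-disjoint union, over $\{x,y\}\in E(I)$, of the complete bipartite graphs on $\{x\}\times\mathbb{Z}_{n+1}$ and $\{y\}\times\mathbb{Z}_{n+1}$. The organizing idea is to classify the edges of each such $K_{n+1,n+1}$ by their difference: after fixing an orientation $x\to y$ on every edge of $I$, assign to $\{x_i,y_j\}$ the value $d=j-i\pmod{n+1}$. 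This sorts the $(n+1)^2$ edges of each weighted edge into $n+1$ classes of size $n+1$, indexed by $d\in\{0,1,\dots,n\}$, with $d=0$ being precisely the class $J(I_{(n+1)})$ that we discard.

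First I would check that for each fixed $d\in\{1,\dots,n\}$ the set $B_d$ is a $1$-factor of $H$. On a single weighted edge the rule $x_i\mapsto y_{i+d}$ is a bijection of $\mathbb{Z}_{n+1}$ (because $i\mapsto i+d$ is), so $\{\{x_i,y_{i+d}\}:i\in\mathbb{Z}_{n+1}\}$ is a perfect matching of $\{x,y\}\times\mathbb{Z}_{n+1}$; taking the union over all $\frac{m}{2}$ edges of $I$, whose vertex sets are disjoint and exhaust $V(K_m)$, yields a perfect matching of all $m(n+1)$ vertices, i.e.\ a spanning $1$-factor. Every edge in $B_d$ has difference $d\neq 0$, so $B_d\subseteq E(H)$.

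Next I would argue disjointness and coverage together. Because the difference is an invariant of an edge, the classes $B_1,\dots,B_n$ are pairwise edge-disjoint; conversely any edge $\{x_i,y_j\}$ of $H$ has $i\neq j$, hence difference $j-i\in\{1,\dots,n\}$, and so lies in exactly one $B_d$. Thus $B_1\cup\cdots\cup B_n=E(H)$, and $\{B_1,\dots,B_n\}$ partitions $E(H)$ into $n$ spanning $1$-factors and no other blocks. This is precisely a $(K_2,K_{1,n})$-$URD(H;n,0)$, equivalently a $(K_2,K_{1,n})$-$AURD(I_{(n+1)};n,0)$.

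The arithmetic here is routine, so the only points demanding attention are bookkeeping rather than difficulty: one must fix the edge orientation consistently so that the difference is well defined, and confirm the count, namely that the discarded difference-$0$ class equals $J(I_{(n+1)})$ and that each of the remaining $n$ differences yields a \emph{full} spanning factor, which is where the hypothesis that $I$ is a $1$-factor (and not merely a matching) is used. It is worth noting that the parity of $n$ plays no role in this argument; the construction goes through for any $n\geq 1$, with oddness mattering only for compatibility with the companion lemmas when these pieces are later assembled.
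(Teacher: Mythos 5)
Your proposal is correct and follows exactly the construction the paper uses: the sets $B_d=\{\{x_i,y_{i+d}\}: i\in\mathbb{Z}_{n+1}\}$ for $d\in\{1,\dots,n\}$, taken over all edges of $I$, with the difference-$0$ class identified as $J(I_{(n+1)})$. You simply spell out the verification (perfect matching on each weighted edge, disjointness and coverage via the difference invariant) in more detail than the paper, and your observation that the parity of $n$ is not actually needed here is accurate.
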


By Lemma $~\ref{oneF}$ and Lemma $~\ref{starF}$, we've shown that $C_{m(n+1)}-J(C_{m(n+1)})$ can be decomposed into $2n$ $1$-factors or into $n+1$ $n$-star factors. In order to find uniformly resolvable decompositions of $K_v$, we must now turn into finding decompositions of $J(K_{m(n+1)}) \cup \Big(\bigcup\limits_{x\in V(K_m)}K_{n+1}^x\Big)$.

\section{Difference 0 and Inner Edges}
Recall, by Lemmas~\ref{oddC} and~\ref{evenC}, $K_{m(n+1)}$ has been decomposed as follows:

\begin{align*}
    K_{m(n+1)} =
        \begin{cases}
           \Big( \cup_{k=1}^{\frac{m-1}{2}} C_{m(n+1)}^k \Big),  &\textrm{ if $m$ is odd }\\
           \Big( \cup_{k=1}^{\frac{m-2}{2}} C_{m(n+1)}^k \Big)  \cup (I_{(n+1)}),  &\textrm{ if $m$ is even}.
    \end{cases}
\end{align*}

Denote each weighted cycle by $C_{m(n+1)}^k$ where $1\leq k \leq t$ with $t = \frac{m-1}{2}$ if $m$ is odd and $t = \frac{m-2}{2}$ if $m$ is even. Recall that all $C_{m(n+1)}^k$ have the same vertex set, but they are all mutually edge disjoint subgraphs of $K_{m(n+1)}$. Because each $C_{m(n+1)}^k$ has either a $(K_2,K_{1,n})-AURD(C_{m(n+1)};0,n+1)$ decomposition or a $(K_2,K_{1,n})-AURD(C_{m(n+1)};2n,0)$ decomposition, it follows that $\cup_{k=1}^{t} C_{m(n+1)}^k$ has an almost uniformly resolvable decomposition into $r$ $K_2$-factors and $s$ $K_{1,n}$-factors where $(r,s) \in \mathcal{J}=\{(r,s)|r=2nx, s=(n+1)(t-x),  \textrm{ for any non negative integer } x \leq t \}$.

Recall, an $AURD$ of a weighted graph $K_{m(n+1)}$ exists if a $URD$ of $H$ exists where $H=K_{m(n+1)}-J(K_{m(n+1)})$. In Lemmas~\ref{oddfilling} and~\ref{evenfilling}, we will decompose $\overline{H} = K_v-H$ into $1$-factors. Note that $\overline{H} = J(K_{m(n+1)} \cup \Big(\bigcup\limits_{x\in V(K_m)}K_{n+1}^x\Big)$.

\begin{lemma} 
\label{oddfilling}
Let $v=m(n+1)$ for some odd integer $m\geq 3$, and odd integer $n\geq3$. A $(K_2,K_{1,n})-URD(\overline{H};m+(n-1),0)$ exists.
\end{lemma}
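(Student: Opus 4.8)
The plan is to recognise $\overline{H}$ as a highly structured graph and exhibit an explicit $1$-factorization. By the description preceding the statement, $\overline{H}=J(K_{m(n+1)})\cup\bigl(\bigcup_{x\in V(K_m)}K_{n+1}^x\bigr)$ splits into two edge-disjoint pieces: the difference-$0$ edges $J(K_{m(n+1)})$, which form $n+1$ vertex-disjoint copies of $K_m$ (one on each ``column'' $\{x_i:x\in V(K_m)\}$, indexed by $i\in\mathbb Z_{n+1}$), and the inner edges, which form $m$ vertex-disjoint copies of $K_{n+1}$ (one on each ``row'' $\{x_i:i\in\mathbb Z_{n+1}\}$, indexed by $x\in V(K_m)$). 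Thus $\overline{H}$ is the rook's graph $K_m\,\square\,K_{n+1}$: each vertex has $m-1$ neighbours in its column and $n$ in its row, so $\overline{H}$ is regular of degree $m+n-1$. Since $n$ is odd, $n+1$ is even, so $v=m(n+1)$ is even; hence a decomposition into exactly $m+(n-1)$ $1$-factors is the right target, and it suffices to produce one.

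The first key step is a prism lemma: for odd $m\ge 3$ the graph $K_m\,\square\,K_2$ (two columns $K_m$ joined by a perfect matching of ``rungs'') has a $1$-factorization into $m$ factors. To see this, fix any proper $m$-edge-colouring of $K_m$ with colour classes $P_0,\dots,P_{m-1}$; since each vertex has degree $m-1$, it is missed by exactly one colour class, and a count of the $m$ classes against the $m$ vertices shows that $c\mapsto(\text{vertex missed by }P_c)$ is a bijection onto $V(K_m)$. Placing $P_c$ on each of the two columns leaves exactly the two endpoints of one rung uncovered, namely the rung in the row missed by $P_c$; adding that rung yields a perfect matching $Q_c$ of $K_m\,\square\,K_2$. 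As $c$ runs over $\mathbb Z_m$ the matchings $Q_c$ use every column edge once (the $P_c$ partition both columns) and every rung once (the missed rows are all distinct), so $\{Q_c:c\in\mathbb Z_m\}$ is the desired $1$-factorization.

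The main construction then assembles $\overline{H}$ from these pieces. Since $n+1$ is even, fix a $1$-factorization $N_1,\dots,N_n$ of the complete graph on the column indices $\mathbb Z_{n+1}$. For each $\ell\ge 2$, applying the column-pairing $N_\ell$ inside every row simultaneously gives a $1$-factor of $\overline{H}$ built purely from inner (row) edges; this produces $n-1$ factors and uses all row edges except those coming from $N_1$. The distinguished factor $N_1$ pairs the $n+1$ columns into $(n+1)/2$ disjoint pairs $\{c,c'\}$; the edges surviving on the two columns of such a pair are exactly the two column copies of $K_m$ together with the $m$ rungs $\{x_c,x_{c'}\}$, i.e.\ a copy of $K_m\,\square\,K_2$. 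Applying the prism lemma to each pair and, for each $j\in\{1,\dots,m\}$, taking the union over all pairs of the $j$-th prism factor (the pairs are vertex-disjoint and cover all columns) yields $m$ further $1$-factors of $\overline{H}$. Together this gives $(n-1)+m=m+(n-1)$ pairwise edge-disjoint $1$-factors that partition $E(\overline{H})$, which is exactly a $(K_2,K_{1,n})$-$URD(\overline{H};m+(n-1),0)$.

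The one genuine obstacle is that each column is an odd complete graph $K_m$, which has no perfect matching, so the column edges cannot be $1$-factorised on their own; the whole point of the argument is to borrow a single $1$-factor's worth of row edges (the rungs supplied by $N_1$) to complete each odd column into a prism that \emph{is} $1$-factorable. Checking that the prism factors and the pure row factors are mutually edge-disjoint and exhaust $E(\overline{H})$ is then a routine degree count: each vertex receives degree $m$ from the prism factors (one rung plus its $m-1$ column edges) and degree $n-1$ from the pure row factors, totalling $m+(n-1)$ as required.
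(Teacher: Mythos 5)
Your proof is correct and is essentially the paper's own argument in different packaging: your $m$ prism-derived factors are exactly the paper's factors $A_x\cup B_x$ (a near-one-factor of the odd $K_m$ replicated across the $n+1$ columns of $J(K_{m(n+1)})$, completed by a perfect matching of row edges covering the missed row), and your $n-1$ pure row factors coming from $N_2,\dots,N_n$ are the paper's $\cup_{x\in V(K_m)}B_x^k$ obtained by one-factorizing each $K_{n+1}^x$ minus the matching already used. The rook's-graph viewpoint and the prism lemma are a clean, modular way to organize the same construction, but no genuinely different route is taken.
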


\begin{proof}
For any edge $\{x,y\} \in K_m$, let $R^{x,y} = \{ \{x_{i},y_{i}\} |   i \in \mathbb{Z}_{n+1} \}$ denote a set of $n+1$ edges from $E(J(K_{m(n+1)})$. For any vertex $x \in V(K_m)$, let $R^{x} = \{ \{x_{i},x_{i+1}\} |   \textrm{ even } i \in \mathbb{Z}_{n+1} \}$ denote a set of edges from $E(K_{n+1}^x)$. Then, for each $x \in K_m$, define $A_x \cup B_x$ to be a $1$-factor of $\overline{H}$ as follows:
\begin{align*}
A_{x} =& \{R^{x-1,x+1}, R^{x-2,x+2}, R^{x-3,x+3}, \cdots, R^{x-\frac{m-1}{2},x+\frac{m-1}{2}}\} \\
B_{x} =& \{R^{x}\}
\end{align*}
This produces $m$ $1$-factors of $\overline{H}$. Note that, for any $x\in V(K_m)$, $B_x$ is a $1$-factor of $K_{n+1}^x$. Since $n+1$ is even, it is trivial to decompose $K_{n+1}^x-B_x$ into $n-1$ $1$-factors. Let $B_x^k$ be the $k^{th}$ $1$-factor in this decomposition. Then for $k=1,2,\dots,n-1$, $\cup_{x\in V(K_m)} B_x^k$ gives the remaining $n-1$ $1$-factors of $\overline{H}$. Thus, a $(K_2,K_{1,n})-URD(\overline{H};m+n-1,0)$ exists.


\end{proof}

\begin{lemma} 
\label{evenfilling}
Let $v=m(n+1)$ for some even integer $m\geq 4$ and some odd integer $n\geq3$. A $(K_2,K_{1,n})-URD(\overline{H};m+n-1,0)$ exists.
\end{lemma}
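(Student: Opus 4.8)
The plan is to exploit the fact that, unlike the odd case of Lemma~\ref{oddfilling}, when $m$ is even both of the natural building-block complete graphs already have a $1$-factorization, so no inner edges need to be borrowed to complete the difference-$0$ part. First I would observe that $\overline H = J(K_{m(n+1)}) \cup \big(\bigcup_{x\in V(K_m)} K_{n+1}^x\big)$ splits into two edge-disjoint pieces on the common vertex set $V(K_m)\times\mathbb{Z}_{n+1}$: the difference-$0$ part $J(K_{m(n+1)})$, which is exactly $n+1$ vertex-disjoint copies of $K_m$ (one on $\{x_i : x\in V(K_m)\}$ for each fixed $i\in\mathbb{Z}_{n+1}$), and the inner part $\bigcup_x K_{n+1}^x$, which is $m$ vertex-disjoint copies of $K_{n+1}$. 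A vertex $x_i$ has degree $m-1$ in the first piece and degree $n$ in the second, so $\overline H$ is $(m+n-1)$-regular on $m(n+1)$ (even) vertices; the target is to split it into exactly $m+n-1$ $1$-factors.

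For the difference-$0$ part, since $m$ is even, $K_m$ has a $1$-factorization $F_1,\dots,F_{m-1}$ (\cite{Lu}). For each $\ell$ I would set $G_\ell = \{\{x_i,y_i\} : \{x,y\}\in F_\ell,\ i\in\mathbb{Z}_{n+1}\}$; because $F_\ell$ is a perfect matching of $V(K_m)$ and every index $i$ is used, $G_\ell$ is a $1$-factor of $\overline H$ built only from difference-$0$ edges, and $G_1,\dots,G_{m-1}$ partition all of $J(K_{m(n+1)})$. For the inner part, since $n+1$ is even, each $K_{n+1}^x$ has a $1$-factorization $B_x^1,\dots,B_x^n$ (\cite{Lu}); setting $G'_k = \bigcup_{x\in V(K_m)} B_x^k$ for $k=1,\dots,n$ gives, by vertex-disjointness of the $K_{n+1}^x$, $n$ further $1$-factors of $\overline H$ that partition all inner edges. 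Together the $G_\ell$ and $G'_k$ form $m-1+n = m+n-1$ pairwise edge-disjoint $1$-factors covering every edge of $\overline H$, which is the desired $(K_2,K_{1,n})$-$URD(\overline H;m+n-1,0)$.

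Compared with Lemma~\ref{oddfilling}, I expect essentially no genuine obstacle here: the difficulty in the odd case was precisely that $K_m$ has odd order and hence no $1$-factor, forcing the symmetric pairing $R^{x-k,x+k}$ together with a borrowed inner $1$-factor $B_x$. For even $m$ that device is unnecessary, and in fact a naive attempt to mimic the odd construction would run into parity trouble (the antipodal class $x+\tfrac m2$ has no symmetric partner, and the assignment $\{y,z\}\mapsto x$ determined by $2x\equiv y+z$ is no longer a bijection mod $m$). The only point to state with care is that lifting a single $1$-factor of $K_m$ to all $n+1$ index layers yields a spanning $1$-factor of $\overline H$, and likewise for the inner copies, so that the counts $m-1$ and $n$ are exact and the two families are edge-disjoint; both facts are immediate from the product-like structure described above.
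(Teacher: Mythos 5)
Your proof is correct and follows essentially the same route as the paper: lift a $1$-factorization of $K_m$ (which exists since $m$ is even) across all $n+1$ layers to get $m-1$ $1$-factors of $J(K_{m(n+1)})$, and take unions over $x$ of $1$-factorizations of the disjoint copies $K_{n+1}^x$ (which exist since $n+1$ is even) to get the remaining $n$ $1$-factors. The regularity check and the remarks contrasting with the odd case are extra but harmless.
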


\begin{proof}

Because $m$ is even, there exists a $1$-factorization of $K_m$ with $m-1$ $1$-factors. Let $F_1,F_2,\dots,F_{m-1}$ denote the $m-1$ $1$-factors. For each edge $\{x,y\} \in F_k$, where $k \in \{1,2,\dots,m-1\}$, let $R^{x,y}=  \{ \{x_{i},y_{i}\} |   i \in \mathbb{Z}_{n+1} \}$ be a set of edges from $J(K_{m(n+1)})$. Then, for $k=1,2,\dots,m-1$, define $A_k=\cup_{\{x,y\}\in F_k} R^{x,y}$ to be a $1$-factor of $J(K_{m(n+1)})$. This produces a total of $m-1$ $1$-factors of $\overline{H}$.

For any $x\in V(K_m)$, there exists a $1$-factorization of $K_{n+1}^x$ because $n$ is odd. Let $F_k^x$ denote the $k^{th}$ $1$-factor of the $1$-factorization. Then, for $k=1,2,\dots,n$, define $B_k=\cup_{x \in V(K_m)} F_k^x$ to be a $1$-factor of $\cup_{x \in V(K_m)} K_{n+1}^x$. Thus, we obtain another $n$ $1$-factors of $\overline{H}$.

\end{proof}

\section{Results}
Let $v=m(n+1)$. Recall how we view $K_v$ in Equation $(2)$ in Lemma~\ref{evenC}. We will finalize the decomposition of $K_v$, dealing with the cases for $m$ odd and $m$ even cases separately.

\begin{thm} 
\label{oddResult}
Let $v=m (n+1)$ for any odd integer $n \geq 3 $ and any odd integer $m \geq 3$. A $(K_2,K_{1,n})-URD(K_v;r,s)$ exists for all pairs $(r,s) \in \mathcal{J}=\{(r,s)|r=2n\ell+(m+n-1), s=(n+1)(\frac{m-1}{2}-\ell)$ for any some non-negative integer $\ell \leq \frac{m-1}{2} \}$.
\end{thm}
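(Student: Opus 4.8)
The plan is to assemble the factorization from the three building blocks established above, using a single parameter $\ell$ to interpolate between the all-$K_2$ and all-$K_{1,n}$ extremes. First I would invoke the decomposition of Equation~(2) for odd $m$, writing
\[
K_v = \Big(\bigcup_{k=1}^{t} C_{m(n+1)}^k\Big) \cup \Big(\bigcup_{x\in V(K_m)}K_{n+1}^x\Big), \qquad t=\tfrac{m-1}{2},
\]
which is legitimate because $m$ is odd and $m\mid |E(K_m)|$, so Lemma~\ref{oddC} supplies the $\frac{m-1}{2}$ weighted $m$-cycles. I would then peel off the difference-$0$ edges of every cycle, so that
\[
K_v = \Big(\bigcup_{k=1}^{t} H^k\Big) \cup \overline{H}, \qquad H^k = C_{m(n+1)}^k - J\big(C_{m(n+1)}^k\big),
\]
where $\overline{H}=J(K_{m(n+1)})\cup\big(\bigcup_{x}K_{n+1}^x\big)$ collects precisely the difference-$0$ and inner edges. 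These $t+1$ pieces are mutually edge-disjoint and together exhaust $E(K_v)$.

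Next, fixing a non-negative integer $\ell\le t$, I would decompose $\ell$ of the graphs $H^k$ via Lemma~\ref{oneF} (each into $2n$ $K_2$-factors) and the remaining $t-\ell$ of them via Lemma~\ref{starF} (each into $n+1$ $K_{1,n}$-factors); Lemma~\ref{oddfilling} then decomposes $\overline{H}$ into $m+n-1$ $K_2$-factors. The essential point—and the one I would check most carefully—is that every factor produced this way is a \emph{spanning} subgraph of $K_v$: the vertex set of each $C_{m(n+1)}^k$ and of $\overline{H}$ is all of $V(K_m)\times\mathbb{Z}_{n+1}=V(K_v)$, so each $K_2$-factor is a perfect matching on $V(K_v)$ and each $K_{1,n}$-factor spans $V(K_v)$ with pairwise disjoint $n$-stars. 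Hence the union of all these factors is a genuine uniformly resolvable decomposition of $K_v$, not merely of the constituent subgraphs.

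Counting the factors by type then finishes the argument: the cycles contribute $2n\ell$ $K_2$-factors and $(n+1)(t-\ell)$ $K_{1,n}$-factors, and $\overline{H}$ contributes $m+n-1$ further $K_2$-factors, giving
\[
r = 2n\ell+(m+n-1), \qquad s=(n+1)\Big(\tfrac{m-1}{2}-\ell\Big),
\]
which runs over exactly the pairs in $\mathcal{J}$ as $\ell$ ranges over $0,1,\dots,t$. I do not anticipate a deep obstacle, since the combinatorial heart of the result already resides in Lemmas~\ref{oneF}, \ref{starF}, and~\ref{oddfilling}. The only real care needed is the bookkeeping: confirming edge-disjointness across the $t+1$ pieces, verifying that the difference-$0$ edges used by $\overline{H}$ are disjoint from the difference-$\neq 0$ edges resolved inside each $H^k$, and checking that letting $\ell$ range over $\{0,\dots,\frac{m-1}{2}\}$ recovers every admissible $(r,s)$ in the stated set $\mathcal{J}$.
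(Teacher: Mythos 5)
Your proposal is correct and follows essentially the same route as the paper: split $K_v$ into the $\frac{m-1}{2}$ weighted cycles plus $\overline{H}$, apply Lemma~\ref{oneF} to $\ell$ cycles and Lemma~\ref{starF} to the rest, and finish with the $m+n-1$ further $1$-factors from Lemma~\ref{oddfilling}. Your extra attention to edge-disjointness and the spanning property of each factor is a welcome (if implicit in the paper) bit of bookkeeping, but it is not a different argument.
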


\begin{proof}
Recall from the discussion following Lemma~\ref{evenC}, that we are viewing $K_v$ as:
\begin{align*}
K_v &= (K_{m(n+1)}) \cup \Big(\bigcup\limits_{x\in V(K_m)}K_{n+1}^x\Big) \\
&= \Bigg(\bigcup\limits_{k=1}^{\frac{m-1}{2}} C_{m(n+1)}^k \Bigg) \cup \Big(\bigcup\limits_{x\in V(K_m)}K_{n+1}^x\Big)
\end{align*}
By Lemmas~\ref{oneF} and Lemma~\ref{starF}, each $C_{m(n+1)}$ can be decomposed into either a $(K_2,K_{1,n})-AURD(C_{m(n+1)};2n,0)$ or a $(K_2,K_{1,n})-AURD(C_{m(n+1)};0,n+1)$. Let $\ell$(with $0 \leq \ell \leq \frac{m-1}{2}$) be the number of weighted $m$-cycles in which we choose to a $(K_2,K_{1,n})-AURD(C_{m(n+1)};2n,0)$. Then, $(\frac{m-1}{2}-\ell)$ weighted cycles will be chosen to have a $(K_2,K_{1,n})-AURD(C_{m(n+1)};0,n+1)$. By this method, we obtain $2n\ell$ $1$-factors and $(n+1)(\frac{m-1}{2}-\ell)$ $n$-star factors. Then, by Lemma~\ref{oddfilling}, we obtain $(m+n-1)$ more $1$-factors, and this completes the decomposition.

\end{proof}

\begin{thm} 
\label{evenResult}
Let $v=m\cdot(n+1)$ for any odd integer $n \geq 3 $ and any even integer $m \geq 4$. A $(K_2,K_{1,n})-URD(K_v;r,s)$ exists for all pairs $(r,s) \in \mathcal{J}=\{(r,s)|r=2n\ell+(m+2n-1), s=(n+1)(\frac{m-2}{2}-\ell)$ for any non-negative integer $0 \leq \ell \leq \frac{m-2}{2}\}$ .
\end{thm}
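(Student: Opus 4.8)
The plan is to mirror the proof of Theorem~\ref{oddResult}, replacing the odd-$m$ form of Equation~$(2)$ by its even-$m$ counterpart and inserting the handling of the extra weighted $1$-factor $I_{(n+1)}$. It is precisely this $I_{(n+1)}$ term, absent when $m$ is odd, that will supply the additional $n$ $1$-factors accounting for the discrepancy $r=2n\ell+(m+2n-1)$ against $m+n-1$ in the odd case.

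First I would write, using Equation~$(2)$ for even $m$,
$$K_v = \Bigg(\bigcup_{k=1}^{\frac{m-2}{2}} C_{m(n+1)}^k\Bigg) \cup \Big(\bigcup_{x\in V(K_m)} K_{n+1}^x\Big) \cup I_{(n+1)}.$$
Fix a non-negative integer $\ell \leq \frac{m-2}{2}$. To $\ell$ of the weighted cycles I would apply Lemma~\ref{oneF}, decomposing each into $2n$ $1$-factors, and to the remaining $\frac{m-2}{2}-\ell$ cycles I would apply Lemma~\ref{starF}, decomposing each into $n+1$ $n$-star factors. Because all the $C_{m(n+1)}^k$ share a common vertex set spanning $V(K_v)$, this yields $2n\ell$ $1$-factors and $(n+1)\big(\tfrac{m-2}{2}-\ell\big)$ spanning $n$-star factors of $K_v$.

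The new ingredient is $I_{(n+1)}$: I would apply Lemma~\ref{edgeF} to obtain a $(K_2,K_{1,n})$-$AURD(I_{(n+1)};n,0)$, contributing $n$ further $1$-factors while leaving the difference-$0$ edges $J(I_{(n+1)})$ untouched. Finally, since every remaining edge of $K_v$ lies in $\overline{H}=J(K_{m(n+1)})\cup\big(\bigcup_x K_{n+1}^x\big)$, I would invoke Lemma~\ref{evenfilling} to decompose $\overline{H}$ into $m+n-1$ additional $1$-factors. Summing, the number of $1$-factors is $2n\ell+n+(m+n-1)=2n\ell+(m+2n-1)=r$, and the number of $n$-star factors is $(n+1)\big(\tfrac{m-2}{2}-\ell\big)=s$, exactly as required.

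I do not expect a genuine obstacle: every edge of $K_v$ belongs to exactly one of the three pieces of Equation~$(2)$ (the cycles, $I_{(n+1)}$, and $\overline{H}$, each of the first two split into its difference-$0$ and nonzero-difference parts), and each piece is handled by an already-established lemma, so edge-disjointness and the spanning property are automatic. The only point demanding care is the bookkeeping that isolates the contribution of $I_{(n+1)}$---the $n$ extra $1$-factors from Lemma~\ref{edgeF}---as the single structural difference from the odd-$m$ argument.
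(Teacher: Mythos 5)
Your proposal is correct and follows essentially the same route as the paper's own proof: decompose the $\frac{m-2}{2}$ weighted cycles via Lemmas~\ref{oneF} and~\ref{starF} according to the choice of $\ell$, handle $I_{(n+1)}$ with Lemma~\ref{edgeF} for the extra $n$ $1$-factors, and finish with the $m+n-1$ $1$-factors from Lemma~\ref{evenfilling}. The bookkeeping $2n\ell+n+(m+n-1)=2n\ell+(m+2n-1)$ matches the paper exactly.
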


\begin{proof}
Recall from the discussion following Lemma~\ref{evenC}, that we are viewing $K_v$ as:
\begin{align*}
K_v &= (K_{m(n+1)}) \cup \Big(\bigcup\limits_{x\in V(K_m)}K_{n+1}^x\Big)\\ 
&= \Bigg( \bigcup\limits_{k=1}^{\frac{m-2}{2}} C_{m(n+1)}^k \Bigg) \cup \Big(\bigcup\limits_{x\in V(K_m)}K_{n+1}^x\Big) \cup (I_{(n+1)})
\end{align*}
By Lemmas~\ref{oneF} and Lemma~\ref{starF}, each $C_{m(n+1)}$ can be decomposed into either a $(K_2,K_{1,n})-AURD(C_{m(n+1)};2n,0)$ or a $(K_2,K_{1,n})-AURD(C_{m(n+1)};0,n+1)$. Let $\ell$(with $0 \leq \ell \leq \frac{m-2}{2}$) be the number of weighted $m$-cycles in which we choose a $(K_2,K_{1,n})-AURD(C_{m(n+1)};2n,0)$. Then, $(\frac{m-2}{2}-\ell)$ weighted cycles will be chosen to have a $(K_2,K_{1,n})-AURD(C_{m(n+1)};0,n+1)$.

By this method, we obtain $2n\ell$ $1$-factors and $(n+1)(\frac{m-2}{2}-\ell)$ $n$-star factors. By  Lemma~\ref{edgeF}, $I_{(n+1)}$ has a $(K_2,K_{1,n})-AURD(I_{(n+1)};n,0)$, so this produces $n$ more $1$-factors. Then, by Lemma~\ref{evenfilling}, we obtain $(m+n-1)$ more $1$-factors, which completes the decomposition.

\end{proof}

By Lemma~\ref{oddResult}, let $n\geq3$ be an odd integer and $m\geq3$ be an odd integer. We provided a solution to the existence of a $(K_2,K_{1,n})-URD(K_v;r,s)$ when $r \geq m+n-1$. By Lemma~\ref{evenResult}, if $n\geq3$ is an odd integer and $m\geq4$ is an even integer, we provided a solution to the existence of a $(K_2,K_{1,n})-URD(K_v;r,s)$ when $r \geq m+2n-1$. 

Since our construction requires a form of a cycle with $m$ vertices, $m$ must not be less than $3$. If $m<3$, then a $C_m$ does not exist, and we cannot use our construction. Second, if $m$ is odd, our construction requires a minimum of $m+n-1$ $1$-factors. Similarly, if $m$ is even, our construction requires a minimum of $m+2n-1$ $1$-factors. Thus, the following cases are excluded from our results.

\begin{itemize}
    \item $v=(n+1)$ and $v=2(n+1)$.
    \item Any $(r,s)$ pair with $r < m+n-1$ for odd $m \geq3$.
    \item Any $(r,s)$ pair with $r < m+2n-1$ for even $m \geq 4$.
\end{itemize}

\clearpage

\end{document}